\newtheorem{theorem}{Theorem}
\newtheorem {corollary}{Corollary}
\theoremstyle{definition}
\newtheorem{remark}{Remark}
\DeclareMathOperator\Sym{Sym}
\DeclareMathOperator\defi{def.}
\DeclareMathOperator\Hom{Hom}
\DeclareMathOperator\pt{point}
\begin{document}

\title[]{Identities involving (doubly) symmetric polynomials and integrals over Grassmannians}

\author[]{Dang Tuan Hiep}

\address{Mathematics Division, National Center for Theoretical Sciences, No. 1 Sec. 4 Roosevelt Rd., National Taiwan University, Taipei, 106, Taiwan}
\email{hdang@ncts.ntu.edu.tw}

\curraddr{Faculty of Mathematics and Computer Science, Da Lat University, \\ No. 1 Phu Dong Thien Vuong Rd., Ward 8, Da Lat City, Vietnam}
\email{hiepdt@dlu.edu.vn}

\subjclass[2010]{14M15; 14N15; 05E05; 55N25}

\keywords{Interpolation, (doubly) symmetric polynomial, localization, equivariant cohomology, Grassmannian.\\
This research was supported by grants no. B2018.DNA.10 and no. 101.04-2018.305.}

\begin{abstract}
We obtain identities involving symmetric and doubly symmetric polynomials. These identities provide a way of handling expressions appearing in the Atiyah-Bott-Berline-Vergne formula for Grassmannians. As corollaries, we obtain formulas for integrals over Grassmannians of characteristic classes of the tautological bundles. Moreover, we provide a valid proof of the Martin formula for the classical Grassmannian.
\end{abstract}

\maketitle

\section{Introduction}

Throughout we always assume that all polynomials are over a field and $\lambda_1,\ldots,\lambda_n$ are indeterminates. The Lagrange interpolation formula says that a polynomial $P(x)$ of degree not greater than $n-1$ in one variable can be written in the following
\begin{equation}\label{interpolation}
    P(x) = \sum_{i=1}^nP(\lambda_i)L_i(x),
\end{equation}
where 
$$L_i(x) = \prod_{j\neq i}\frac{x-\lambda_j}{\lambda_i-\lambda_j}.$$
This implies the following identity
\begin{equation}\label{1}
    \sum_{i=1}^n\frac{P(\lambda_i)}{\displaystyle\prod_{j\neq i}(\lambda_i-\lambda_j)}= c_n,
\end{equation}
where $c_n$ is the coefficient of $x^{n-1}$ in the polynomial $P(x)$.

The first goal of this paper is to generalize the identity (\ref{1}) for multivariate symmetric polynomials. For convenience, we shall write $[n]$ for the set $\{1,2,\ldots,n\}$. For each subset $I = \{i_1,\ldots,i_k\} \subset [n]$, we denote by $\lambda_I = (\lambda_{i_1},\ldots,\lambda_{i_k})$ and $I^c = [n]\setminus I$. Recall that a polynomial $P(x_1,\ldots,x_k)$ is said to be symmetric if it is invariant under permutations of $x_1,\ldots,x_k$. We obtain the following result.

\begin{theorem}\label{main}
Let $P(x_1,\ldots,x_k)$ be a symmetric polynomial of degree not greater than $k(n-k)$ in $k$ variables $(k < n)$. Then we have the following identity
$$\sum_{I\subset [n],|I|=k}\frac{P(\lambda_I)}{\displaystyle\prod_{i\in I,j\in I^c}(\lambda_i-\lambda_j)} = \frac{c(k,n)}{k!},$$
where $c(k,n)$ is the coefficient of $x_1^{n-1}\ldots x_k^{n-1}$ in the polynomial 
$$P(x_1,\ldots,x_k)\prod_{j \neq i}(x_i-x_j).$$ 
\end{theorem}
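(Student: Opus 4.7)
The plan is to reduce the identity to $k$ simultaneous applications of the univariate identity (\ref{id1}), by first converting the subset sum into an unrestricted sum over ordered $k$-tuples and then factoring the resulting rational expression variable by variable.

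First I would use the symmetry of $P$ to rewrite the left hand side as a sum over ordered tuples of distinct indices: for each $k$-subset $I=\{i_1,\ldots,i_k\}$ there are $k!$ orderings, and $P(\lambda_{i_1},\ldots,\lambda_{i_k})=P(\lambda_I)$, so
$$k!\sum_{|I|=k}\frac{P(\lambda_I)}{\prod_{i\in I}\prod_{j\in I^c}(\lambda_i-\lambda_j)}=\sum_{\substack{(i_1,\ldots,i_k)\in[n]^k\\\text{distinct}}}\frac{P(\lambda_{i_1},\ldots,\lambda_{i_k})}{\prod_{a=1}^k\prod_{j\in I^c}(\lambda_{i_a}-\lambda_j)}.$$
The crucial move is to multiply numerator and denominator by $\prod_{a\neq b}(\lambda_{i_a}-\lambda_{i_b})$: since $I^c\cup(I\setminus\{i_a\})=[n]\setminus\{i_a\}$, the denominator then collapses to $\prod_{a=1}^k\prod_{j\neq i_a}(\lambda_{i_a}-\lambda_j)$, i.e.\ one Lagrange-type factor per coordinate. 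Setting
$$Q(x_1,\ldots,x_k):=P(x_1,\ldots,x_k)\prod_{i\neq j}(x_i-x_j),$$
the auxiliary Vandermonde-squared factor vanishes whenever two arguments coincide, so the sum over distinct ordered tuples agrees with the unrestricted sum over all $(i_1,\ldots,i_k)\in[n]^k$.

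Next I would exploit this factorization by expanding $Q=\sum a_{m_1,\ldots,m_k}\,x_1^{m_1}\cdots x_k^{m_k}$ and interchanging sums:
$$\sum_{(i_1,\ldots,i_k)\in[n]^k}\frac{Q(\lambda_{i_1},\ldots,\lambda_{i_k})}{\prod_{a=1}^k\prod_{j\neq i_a}(\lambda_{i_a}-\lambda_j)}=\sum_{m_1,\ldots,m_k}a_{m_1,\ldots,m_k}\prod_{a=1}^k\left(\sum_{i=1}^n\frac{\lambda_i^{m_a}}{\prod_{j\neq i}(\lambda_i-\lambda_j)}\right),$$
so that identity (\ref{id1}) replaces each inner sum by $h_{m_a-n+1}(\lambda_1,\ldots,\lambda_n)$. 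The hypothesis $\deg P\leq k(n-k)$ enters here through $\deg Q\leq k(n-k)+k(k-1)=k(n-1)$: every monomial of $Q$ satisfies $m_1+\cdots+m_k\leq k(n-1)$, so whenever some $m_a\geq n$ there must be another $m_b\leq n-2$, forcing $h_{m_b-n+1}=0$. Only the monomial $x_1^{n-1}\cdots x_k^{n-1}$ therefore survives, with weight $\prod_a h_0=1$ and contribution $a_{n-1,\ldots,n-1}=c(k,n)$; dividing by $k!$ will then yield the claimed formula.

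The main obstacle is conceptual rather than computational: one must guess the right denominator-clearing trick, namely the auxiliary factor $\prod_{i\neq j}(x_i-x_j)$, which simultaneously produces denominators amenable to (\ref{id1}) in each coordinate, extends the sum to all of $[n]^k$, and tightens the degree bound on $Q$ to exactly $k(n-1)$, so that a short pigeonhole argument isolates the single surviving coefficient $c(k,n)$.
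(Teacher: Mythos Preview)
Your proof is correct, and it shares with the paper the single decisive idea: multiplying by the squared Vandermonde $\prod_{i\neq j}(x_i-x_j)$ so that the denominator becomes a product of one full Lagrange factor per variable and so that tuples with repeated indices contribute nothing. Where you diverge from the paper is in how you then evaluate the resulting sum. The paper first applies the multivariate division algorithm to reduce $F=Q$ modulo the ideal generated by $\prod_j(x_i-\lambda_j)$, obtaining a remainder $R$ with all partial degrees at most $n-1$, and then applies Lagrange interpolation variable by variable to read off the top coefficient of $R$, which equals that of $F$ by the total-degree bound. You instead expand $Q$ monomially, apply identity~(\ref{id1}) directly in each coordinate (which handles arbitrary exponents via $h_{m-n+1}$), and kill the unwanted terms by a pigeonhole argument on $m_1+\cdots+m_k\le k(n-1)$. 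Your route is a bit more elementary in that it avoids the division algorithm entirely and reduces the multivariable statement transparently to the one-variable case; the paper's route is slightly more structural but needs the extra reduction step to make the interpolation exact.
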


More generally, we also obtain an identity involving doubly symmetric polynomials. 
Recall that a polynomial $P(x_1,\ldots,x_k,y_1,\ldots,y_{n-k})$ is said to be doubly symmetric if it is invariant under permutations of $x_1,\ldots,x_k$ and permutations of $y_1,\ldots,y_{n-k}$ respectively. We obtain the following result.

\begin{theorem}\label{thm3}
Let $P(x_1,\ldots,x_k,y_1,\ldots,y_{n-k})$ be a doubly symmetric polynomial of degree not greater than $k(n-k)$. Then we have the following identity
$$\sum_{I\subset [n],|I|=k}\frac{P(\lambda_I,\lambda_{I^c})}{\displaystyle\prod_{i\in I,j\in I^c}(\lambda_i-\lambda_j)} = \frac{d(k,n)}{k!(n-k)!},$$
where $d(k,n)$ is the coefficient of $x_1^{n-1}\ldots x_k^{n-1}y_1^{n-1}\ldots y_{n-k}^{n-1}$ in the polynomial
$$P(x_1,\ldots,x_k,y_1,\ldots,y_{n-k})\prod_{j \neq i}(x_i-x_j)\prod_{j \neq i}(y_i-y_j)\prod_{i=1}^{n-k}\prod_{j=1}^k(y_i - x_j).$$
\end{theorem}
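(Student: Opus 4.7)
The plan is to reduce Theorem~\ref{thm3} to Theorem~\ref{main} by constructing, from the doubly symmetric polynomial $P(x,y)$, a single symmetric polynomial $\hat{P}(x_1,\ldots,x_k)$ that captures all the values $P(\lambda_I,\lambda_{I^c})$ simultaneously. The key observation is that once $\lambda_1,\ldots,\lambda_n$ are fixed, the multiset $\{\lambda_j : j\in I^c\}$ is determined by $\{\lambda_i : i\in I\}$ as its complement, so every symmetric function of $\lambda_{I^c}$ is polynomial in the symmetric functions of $\lambda_I$ (with coefficients depending on $\lambda_1,\ldots,\lambda_n$). This allows elimination of the $y$-variables and an appeal to Theorem~\ref{main}.

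Concretely, I would expand
$$\prod_{j\in I^c}(1+t\lambda_j) \;=\; \prod_{m=1}^n(1+t\lambda_m)\cdot\prod_{i\in I}(1+t\lambda_i)^{-1}$$
as a formal power series in $t$; using $\prod_{i\in I}(1+t\lambda_i)^{-1} = \sum_{b\geq 0}(-1)^b h_b(\lambda_I)t^b$, this yields $e_r(\lambda_{I^c}) = \sum_{a+b=r}(-1)^b e_a(\lambda_1,\ldots,\lambda_n) h_b(\lambda_I)$. Since $P$ is symmetric in $y_1,\ldots,y_{n-k}$, I write $P(x,y) = \tilde{P}\bigl(x;e_1(y),\ldots,e_{n-k}(y)\bigr)$ and substitute
$$e_r(y)\;\mapsto\;\phi_r(x)\;:=\;\sum_{a+b=r}(-1)^b e_a(\lambda_{[n]})h_b(x)$$
to obtain $\hat{P}(x) := \tilde{P}(x;\phi_1(x),\ldots,\phi_{n-k}(x))$, a symmetric polynomial in $x_1,\ldots,x_k$ of total $x$-degree at most $k(n-k)$ satisfying $\hat{P}(\lambda_I) = P(\lambda_I,\lambda_{I^c})$ for every $k$-subset $I\subset[n]$. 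Applying Theorem~\ref{main} to $\hat{P}$ then gives
$$\sum_{|I|=k}\frac{P(\lambda_I,\lambda_{I^c})}{\prod_{i\in I,j\in I^c}(\lambda_i-\lambda_j)} \;=\; \frac{1}{k!}\bigl[x_1^{n-1}\cdots x_k^{n-1}\bigr]\hat{P}(x)V(x),\qquad V(x):=\prod_i\prod_{j\neq i}(x_i-x_j).$$

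The main obstacle is identifying the right-hand side with $d(k,n)/(k!(n-k)!)$: $\hat{P}$ depends on $\lambda_1,\ldots,\lambda_n$ while $d(k,n)$ does not. A degree count resolves the $\lambda$-dependence: $V(x)\hat{P}(x)$ has $x$-degree at most $k(n-1)$, so the extraction $[x_1^{n-1}\cdots x_k^{n-1}]$ sees only the top-$x$-degree component; and in each $\phi_r(x)$ the top-$x$-degree contribution is the $\lambda$-independent term $(-1)^r h_r(x)$. Hence $[x_1^{n-1}\cdots x_k^{n-1}]\hat{P}(x)V(x) = [x_1^{n-1}\cdots x_k^{n-1}]\bar{P}(x)V(x)$, where $\bar{P}(x)$ denotes $P(x,y)$ with $e_r(y)$ replaced by $(-1)^r h_r(x)$. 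The proof is then completed by establishing the Cauchy-type identity
$$\bigl[y_1^{n-1}\cdots y_{n-k}^{n-1}\bigr]P(x,y)\prod_i\prod_{j\neq i}(y_i-y_j)\prod_i\prod_j(y_i-x_j) \;=\; (n-k)!\,\bar{P}(x),$$
after which multiplying by $V(x)$ and extracting $[x_1^{n-1}\cdots x_k^{n-1}]$ yields $d(k,n) = (n-k)!\,[x_1^{n-1}\cdots x_k^{n-1}]\bar{P}(x)V(x)$ as required. The base case $P=1$ reduces to the Vandermonde evaluation $[y_1^{n-k-1}\cdots y_{n-k}^{n-k-1}]V(y)=(n-k)!$, which follows from $V(y) = (-1)^{\binom{n-k}{2}}V_<(y)^2$ and matching permutation pairs in the resulting double sum. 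The general case follows by linearity from verification on a basis of symmetric polynomials in $y$ (for instance monomial or Schur); this final Cauchy-type identity is the technical heart of the argument.
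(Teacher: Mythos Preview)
Your reduction to Theorem~\ref{main} is correct and elegant: the construction of $\hat P(x)$ via $\phi_r(x)=\sum_{a+b=r}(-1)^b e_a(\lambda_{[n]})h_b(x)$ works, the degree bound $\deg\hat P\le k(n-k)$ holds, and the passage from $\hat P$ to $\bar P$ by retaining only the top term $(-1)^r h_r(x)$ of each $\phi_r$ is justified, since only the degree-$k(n-k)$ homogeneous part of $\hat P$ contributes to $[x_1^{n-1}\cdots x_k^{n-1}]\hat P(x)V(x)$. So you have genuinely established
\[
\sum_{|I|=k}\frac{P(\lambda_I,\lambda_{I^c})}{\prod_{i\in I,\,j\in I^c}(\lambda_i-\lambda_j)}
=\frac{1}{k!}\bigl[x_1^{n-1}\cdots x_k^{n-1}\bigr]\bar P(x)\,V(x).
\]

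The gap is in the final step. Your ``Cauchy-type identity''
\[
\bigl[y_1^{n-1}\cdots y_{n-k}^{n-1}\bigr]\,P(x,y)\prod_{i\ne j}(y_i-y_j)\prod_{i,j}(y_i-x_j)=(n-k)!\,\bar P(x)
\]
is \emph{false} as an equality of polynomials in $x$. Take $n-k=1$, $k=2$, and $P(x_1,x_2,y)=y^2$ (so $\deg P=2=k(n-k)$). Since $e_1(y)=y$, your definition gives $\bar P(x)=(-h_1(x))^2=(x_1+x_2)^2$, whereas the left side is $[y^{2}]\,y^{2}(y-x_1)(y-x_2)=x_1x_2$. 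These differ by $h_2(x_1,x_2)$. They do agree after multiplying by $V(x)$ and extracting $[x_1^{2}x_2^{2}]$, but that weaker equality is exactly what remains to be proved, and your proposed route to it has collapsed. (The variant substitution $h_r(y)\mapsto(-1)^r e_r(x)$ would match the left side in this example, but with only $n-k$ variables $y$ the $h_r(y)$ are not algebraically independent, so that substitution is not a well-defined ring homomorphism and cannot serve as your $\bar P$.)

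What you actually need is only
\[
(n-k)!\,\bigl[x_1^{n-1}\cdots x_k^{n-1}\bigr]\bar P(x)\,V(x)=d(k,n),
\]
and I do not see how to obtain this from Theorem~\ref{main} alone. The paper avoids the issue by not reducing to Theorem~\ref{main} at all: it reruns the same division-plus-Lagrange-interpolation argument directly on $F(X,Y)=P(X,Y)\,V(X)\,V(Y)\prod_{i,j}(y_i-x_j)$ in all $n$ variables, reducing modulo $\prod_l(x_i-\lambda_l)$ and $\prod_l(y_j-\lambda_l)$. The extra factor $\prod_{i,j}(y_i-x_j)$ is precisely what forces $F(\lambda_I,\lambda_J)=0$ whenever $I\cap J\ne\varnothing$, so the interpolation sum collapses to pairs $(I,I^c)$ and the coefficient identification is immediate.
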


The second goal of this paper is to give a way of dealing with integrals over Grassmannians. The idea is as follows. Localization in equivariant cohomology allows us to express integrals in terms of some data attached to the fixed points of a torus action. In particular, for Grassmannians, we obtain interesting formulas with nontrivial relations involving rational functions. Let $G(k,n)$ be the Grassmannian of $k$-dimensional linear spaces in $\mathbb C^n$. Consider the following integrals:
$$\int_{G(k,n)}\Phi(\mathcal S)\quad ,\quad \int_{G(k,n)}\Psi(\mathcal Q)\quad,\quad \int_{G(k,n)}\Delta(\mathcal S,\mathcal Q),$$
where $\Phi(\mathcal S), \Psi(\mathcal Q)$ are respectively characteristic classes of the tautological sub-bundle $\mathcal S$ and quotient bundle $\mathcal Q$ on the Grassmannian $G(k,n)$, and $\Delta(\mathcal S,\mathcal Q)$ is a characteristic class of both $\mathcal S$ and $\mathcal Q$.

Using localization in equivariant cohomology, Weber \cite{We} and Zielenkiewicz \cite{Zi} presented a way of expressing the integrals as iterated residues at infinity of holomorphic functions. Our identities provide another method for dealing with such expressions.

\begin{corollary}\label{integral1}{\rm (Compare with \cite[Formula (4)]{We})}
Suppose that $\Phi(\mathcal S)$ is represented by a symmetric polynomial $P(x_1,\ldots,x_k)$ of degree not greater than $k(n-k)$ in $k$ variables $x_1,\ldots,x_k$ which are the Chern roots of $\mathcal S$ and $\Psi(\mathcal Q)$ is represented by a symmetric polynomial $Q(y_1,\ldots,y_{n-k})$ of degree not greater than $k(n-k)$ in $n-k$ variables $y_1,\ldots,y_{n-k}$ which are the Chern roots of $\mathcal Q$. We then have the following statements:
\begin{enumerate}
\item[(a)] The integral 
$$\int_{G(k,n)}\Phi(\mathcal S) = (-1)^{k(n-k)}\frac{c(k,n)}{k!},$$
where $c(k,n)$ is the coefficient of $x_1^{n-1}\ldots x_k^{n-1}$ in the polynomial 
$$P(x_1,\ldots,x_k)\prod_{j \neq i}(x_i-x_j).$$
\item[(b)] The integral 
$$\int_{G(k,n)}\Psi(\mathcal Q) = \frac{c(k,n)}{(n-k)!},$$
where $c(k,n)$ is the coefficient of $y_1^{n-1}\ldots y_{n-k}^{n-1}$ in the polynomial 
$$Q(y_1,\ldots,y_{n-k})\prod_{j \neq i}(y_i-y_j).$$
\end{enumerate}
\end{corollary}

\begin{corollary}\label{integral2} {\rm (Compare with \cite[Formula 1]{Zi})}
Suppose that $\Delta(\mathcal S,\mathcal Q)$ is represented by a doubly symmetric polynomial $P(x_1,\ldots,x_k,y_1,\ldots,y_{n-k})$ of degree not greater than $k(n-k)$ in $n$ variables which are the Chern roots of $\mathcal S$ and $\mathcal Q$ respectively. We then have the integral
$$\int_{G(k,n)}\Delta(\mathcal S,\mathcal Q) = (-1)^{k(n-k)}\frac{d(k,n)}{k!(n-k)!},$$
where $d(k,n)$ is the coefficient of $x_1^{n-1}\ldots x_k^{n-1}y_1^{n-1}\ldots y_{n-k}^{n-1}$ in the polynomial
$$P(x_1,\ldots,x_k,y_1,\ldots,y_{n-k})\prod_{j \neq i}(x_i-x_j)\prod_{j \neq i}(y_i-y_j)\prod_{i=1}^{n-k}\prod_{j=1}^k(y_i - x_j).$$
\end{corollary}

The results of Corollary \ref{integral1} and Corollary \ref{integral2} are special cases of the formulas derived in \cite{We, Zi}. However, our proofs are new and unrelated. The formulas in the Corollaries are derived from the Theorems \ref{main} and \ref{thm3}, which are proven in a purely algebraic fashion, using the division algorithm for multivariate polynomials and iterated Lagrange interpolation. Indeed the main motivation for these results is to establish a relationship between the Atiyah-Bott-Berline-Vergne formula and Lagrange interpolation, which is a novelty.

The third goal of this paper is devoted to the Martin formula for symplectic quotients. In an unpublished paper, Martin \cite[Theorem B]{Mar} proved an integration formula, which expresses integrals on the symplectic quotient $X\sslash G$ of a Hamiltonian $G$-manifold $X$ in terms of those on the associated symplectic quotient $X\sslash T$, where $T\subset G$ is a maximal torus, in the case in which $X\sslash G$ is a compact manifold. For the case of the classical Grassmannian, the Martin formula is simplified to be the statement (a) of Corollary \ref{integral1}. This will be clarified in the final section of this paper. Therefore, the proof of Corollary \ref{integral1}, which is purely algebraic, is also an algebraic proof of the Martin formula for the classical Grassmannian. 

The rest of the paper is organized as follows: The proof of the identities are presented in Section 2. Section 3 is to give a brief review of the localization formula in equivariant cohomology and the proof of the corollaries. Section 4 is devoted to Martin's formula.
 
\section{Proof of the identities}

Set 
$$F(x_1,\ldots,x_k) = P(x_1,\ldots,x_k)\prod_{j\neq i}(x_i-x_j).$$
By the assumption, the degree of $F$ is not greater than 
$$k(n-k) + k(k-1) = k(n-1).$$
Since $P$ is symmetric, so $F$ is also symmetric. By the division algorithm for multivariate polynomials (see \cite[Theorem 3]{CLO}), there exist the polynomials $F_i(x_1,\ldots,x_k), i = 1, \ldots,k$ and $R(x_1,\ldots,x_k)$ such that 
$$R(x_1,\ldots,x_k) = F(x_1,\ldots,x_k) - \sum_{i=1}^kF_i(x_1,\ldots,x_k)\prod_{j=1}^n(x_i-\lambda_j),$$
and all partial degrees of $R$ are not greater that $n-1$.
By the Lagrange interpolation formula, we have
$$R(x_1,\ldots,x_k) =  \sum_{i_1=1}^nR(\lambda_{i_1},x_2,\ldots,x_k)L_{i_1}(x_1).$$
By the Lagrange interpolation formula for the polynomials $R(\lambda_{i_1},x_2,\ldots,x_k)$, we have
$$R(x_1,\ldots,x_k) = \sum_{i_1=1}^n\sum_{i_2=1}^nR(\lambda_{i_1},\lambda_{i_2},x_3,\ldots,x_k)L_{i_1}(x_1)L_{i_2}(x_2).$$
So on, we have
$$R(x_1,\ldots,x_k) = \sum_{i_1,\ldots,i_k=1}^nR(\lambda_I)\prod_{l=1}^kL_{i_l}(x_l).$$
For each $I=\{i_1,\ldots,i_k\}$, we have $R(\lambda_I) = F(\lambda_I)$, and if $i_s = i_t$ for some $s\neq t$, then $R(\lambda_I)=0$. Since the degree of $F$ is not greater than $k(n-1)$, so the coefficient of $x_1^{n-1}\ldots x_k^{n-1}$ in $R$ is equal to that in $F$. Thus the coefficient of $x_1^{n-1}\ldots x_k^{n-1}$ in $F$ is equal to
$$k!\sum_{I\subset[n],|I|=k}\frac{F(\lambda_I)}{\displaystyle\prod_{i\in I,j\neq i}(\lambda_i-\lambda_j)}.$$
For each $I \subset [n]$, we have 
$$F(\lambda_I) = P(\lambda_I)\prod_{i,j\in I,j\neq i}(\lambda_i-\lambda_j),$$
and
$$\prod_{i\in I,j\neq i}(\lambda_i-\lambda_j) = \prod_{i\in I,j\in I^c}(\lambda_i-\lambda_j)\prod_{i,j\in I,j\neq i}(\lambda_i-\lambda_j).$$
This implies that the coefficient of $x_1^{n-1}\ldots x_k^{n-1}$ in $F$ is equal to 
$$k!\sum_{I\subset [n],|I|=k}\frac{P(\lambda_I)}{\displaystyle\prod_{i\in I,j\in I^c}(\lambda_i-\lambda_j)}.$$
Theorem \ref{main} is proved as desired. 

The proof of Theorem \ref{thm3} is very similar to that of Theorem \ref{main}. It is omitted here.

\begin{remark}
If $P(x_1,\ldots,x_k)$ is a symmetric polynomial whose partial degrees are not greater than $n-k$, then we have the following formula, which was proved by Chen and Louck \cite[Theorem 2.1]{CL},
$$P(x_1,\ldots,x_k) = \sum_{I\subset [n],|I|=k}P(\lambda_I)\frac{\displaystyle\prod_{x\in X,j\in I^c}(x-\lambda_j)}{\displaystyle\prod_{i\in I,j\in I^c}(\lambda_i-\lambda_j)}.$$
By the interpolation formula of Chen and Louck, we get
$$\sum_{I\subset [n],|I|=k}\frac{P(\lambda_I)}{\displaystyle\prod_{i\in I,j\in I^c}(\lambda_i-\lambda_j)} = d(k,n),$$
where $d(k,n)$ is the coefficient of $x_1^{n-k}\ldots x_k^{n-k}$ in $P$. This is in fact a special case of Theorem \ref{main}. It was proved in \cite{Ze} that $k!$ is the coefficient of $x_1^{k-1}\ldots x_k^{k-1}$ in the polynomial 
$$\prod_{j \neq i}(x_i-x_j).$$
If the partial degrees of $P$ are not greater than $n-k$, then we get
$$c(k,n) = d(k,n)k!.$$
\end{remark}

\begin{remark}
Theorem \ref{thm3} is a generalization of Theorem \ref{main}. Indeed, if $P(x_1, \ldots ,x_k)$ is a symmetric polynomial, then it is also a doubly symmetric polynomial. Theorem \ref{thm3} says that the sum
$$\sum_{I\subset [n],|I|=k}\frac{P(\lambda_I)}{\displaystyle\prod_{i\in I,j\in I^c}(\lambda_i-\lambda_j)} = \frac{d(k,n)}{k!(n-k)!},$$
where $d(k,n)$ is the coefficient of $x_1^{n-1}\ldots x_k^{n-1}y_1^{n-1}\ldots y_{n-k}^{n-1}$ in the polynomial
$$P(x_1,\ldots,x_k)\prod_{j \neq i}(x_i-x_j)\prod_{j \neq i}(y_i-y_j)\prod_{i=1}^{n-k}\prod_{j=1}^k(y_i - x_j).$$
It was proved in \cite{Ze} that $(n-k)!$ is the coefficient of $y_1^{n-k-1}\ldots y_{n-k}^{n-k-1}$ in the polynomial
$$\prod_{j \neq i}(y_i-y_j).$$
Thus $(n-k)!$ is also the coefficient of $y_1^{n-1}\ldots y_{n-k}^{n-1}$ in the polynomial
$$\prod_{j \neq i}(y_i-y_j)\prod_{i=1}^{n-k}\prod_{j=1}^k(y_i - x_j).$$
This means that
$$\frac{d(k,n)}{(n-k)!} = c(k,n),$$
which is the coefficient of $x_1^{n-1}\ldots x_k^{n-1}$ in the polynomial 
$$P(x_1,\ldots,x_k)\prod_{j \neq i}(x_i-x_j).$$
The statement of Theorem \ref{main} is obtained as desired.
\end{remark}

\section{Localization in equivariant cohomology}

In this section, we recall some basic definitions and results in the theory of equivariant cohomology. For more details on this theory, we refer to \cite{AB, BV, Bo, Br, CK, EG1, EG2}. Throughout we consider all cohomologies with coefficients in the complex field $\mathbb C$.

Let $T = (\mathbb C^*)^n$ be an algebraic torus of dimension $n$, classified by the principal $T$-bundle $ET \to BT$, whose total space $ET$ is contractible. Let $X$ be a compact space endowed with a $T$-action. Put $X_T = X\times_T ET$, which is itself a bundle over $BT$ with fiber $X$. Recall that the $T$-equivariant cohomology of $X$ is defined to be $H^*_{T}(X) = H^*(X_T)$, where $H^*(X_T)$ is the ordinary cohomology of $X_T$. Note that $H^*_T(\pt) = H^*(BT)$. By pullback via the map $X\to \pt$, we see that $H^*_T(X)$ is an $H^*(BT)$-module. Thus we may consider $H^*(BT)$ as the coefficient ring for equivariant cohomology.
 
A $T$-equivariant vector bundle is a vector bundle $E$ on $X$ together with a lifting of the action on $X$ to an action on $E$ which is linear on fibers. Note that $E_T$ is a vector bundle over $X_T$.

The $T$-equivariant Chern classes $c_i^T(E) \in H^*_T(X)$ are defined to be the Chern classes $c_i(E_T)$. If $E$ has rank $r$, then the top Chern class $c_r^T(E)$ is called the $T$-equivariant Euler class of $E$ and is denoted $e^T(E) \in H^*_T(X)$. More generally, the $T$-equivariant characteristic class $c^T(E) \in H^*_T(X)$ is defined to be the characteristic class $c(E_T)$.

Let $\chi(T)$ be the character group of the torus $T$. For each $\rho \in \chi(T)$, let $\mathbb C_{\rho}$ denote the one-dimensional representation of $T$ determined by $\rho$. Then $L_{\rho} = (\mathbb C_{\rho})_T$ is a line bundle over $BT$, and the assignment $\rho \mapsto -c_1(L_{\rho})$ defines an group isomorphism $f: \chi(T) \simeq H^2(BT)$, which induces a ring isomorphism $\Sym(\chi(T)) \simeq H^*(BT)$. We call $f(\rho)$ the weight of $\rho$. In particular, we denote by $\lambda_i$ the weight of $\rho_i$ defined by $\rho_i(x_1,\ldots,x_n) = x_i$. We thus obtain an isomorphism
$$H^*_T(\pt) = H^*(BT) \simeq \mathbb C[\lambda_1,\ldots,\lambda_n].$$
Let $\mathcal R_T \simeq \mathbb C(\lambda_1,\ldots,\lambda_n)$ be the field of fractions of $\mathbb C[\lambda_1,\ldots,\lambda_n]$. An important result in equivariant cohomology is the localization theorem. Historically, localization in equivariant cohomology was studied by Borel \cite{Bo} and then further investigated by Quillen \cite{Q}, Atiyah-Bott \cite{AB}, and Berline-Vergne \cite{BV}. Among many versions of the formulation of the localization theorem, we choose the one by Atiyah and Bott \cite{AB}.

\begin{theorem} [Atiyah-Bott \cite{AB}]
Let $X^T$ be the fixed point locus of the torus action. Then the inclusion $i : X^T \hookrightarrow X$ induces an isomorphism
$$i^* : H^*_T(X) \otimes \mathcal R_T \simeq H^*_T(X^T)\otimes \mathcal R_T.$$
\end{theorem}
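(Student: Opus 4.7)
The plan is to follow the classical two-step reduction: first reduce the isomorphism statement to a vanishing-after-localization statement for a relative equivariant cohomology, then dispose of the latter by a covering argument built from the orbit structure of a $T$-action without fixed points.

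First I would exploit that $\mathcal R_T$ is the field of fractions of the integral domain $H^*_T(\pt) \simeq \mathbb C[\lambda_1,\ldots,\lambda_n]$, so the functor ${-}\otimes_{H^*_T(\pt)}\mathcal R_T$ is exact and annihilates exactly the torsion $H^*_T(\pt)$-modules. Applying it to the long exact sequence of the pair $(X, X^T)$, the induced map $i^*$ becomes an isomorphism after tensoring with $\mathcal R_T$ if and only if the relative groups $H^*_T(X, X^T)$ are torsion. Using an equivariant tubular (or collar) neighborhood of $X^T$ in $X$ and excision, this in turn reduces to proving that $H^*_T(Y)$ is a torsion $H^*_T(\pt)$-module whenever $T$ acts on a compact space $Y$ with no fixed points.

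For such a $Y$, every orbit $T\cdot y$ has a stabilizer $T_y \subsetneq T$ which lies in the kernel of some nontrivial character $\rho_y \in \chi(T)$. Using the slice theorem (the Mostow--Palais tube theorem, or in the algebraic setting Luna's slice theorem), I would cover $Y$ by finitely many $T$-invariant open sets $U_1,\ldots,U_N$ such that each $U_\alpha$ carries a single character $\rho_\alpha$ whose kernel contains every stabilizer occurring on $U_\alpha$. The equivariant line bundle $(\mathbb C_{\rho_\alpha})_T$ pulled back to $U_\alpha$ then admits a nowhere-vanishing $T$-invariant section, so its equivariant Euler class $f(\rho_\alpha) = \lambda_{\rho_\alpha} \in H^2_T(\pt)$ acts as zero on $H^*_T(U_\alpha)$. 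A Mayer--Vietoris induction on $N$ shows that the product $\lambda_{\rho_1}\cdots\lambda_{\rho_N}\in H^*_T(\pt)$ annihilates $H^*_T(Y)$, proving that $H^*_T(Y)$ is torsion and completing the argument.

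The main obstacle is the covering step. One has to know that on a compact $T$-space one can genuinely extract a \emph{finite} collection of invariant opens each controlled by a single character of $T$; this is exactly the geometric content of the tube/slice theorem, and without it the Mayer--Vietoris reduction does not terminate. Once that geometric input is in place, the remaining work is a standard bookkeeping with long exact sequences and the observation that the Euler class of a nowhere-vanishing section vanishes.
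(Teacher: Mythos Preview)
The paper does not prove this theorem at all: it is quoted from \cite{AB} as an external input and used as a black box in Section~3 to derive the integration formula~(\ref{ABBV}) and then Theorems~\ref{integral1} and~\ref{integral2}. There is therefore no ``paper's own proof'' to compare against.

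That said, your sketch is the standard Quillen--Atiyah--Bott argument and is sound in outline: localizing at $\mathcal R_T$ is exact and kills torsion, the long exact sequence of the pair $(X,X^T)$ reduces the claim to the torsion of $H^*_T(X,X^T)$, excision plus a tubular neighborhood reduces further to showing $H^*_T(Y)$ is torsion for a fixed-point-free $T$-space $Y$, and the slice theorem plus Mayer--Vietoris finishes it. One small point worth tightening: the sentence ``the equivariant line bundle $(\mathbb C_{\rho_\alpha})_T$ pulled back to $U_\alpha$ admits a nowhere-vanishing $T$-invariant section'' is the heart of the local step but is asserted rather than argued. The clean way to see that $f(\rho_\alpha)$ restricts to zero in $H^*_T(U_\alpha)$ is to note that, since every stabilizer on $U_\alpha$ lies in $\ker\rho_\alpha$, the $T$-action on $U_\alpha$ factors through $T' = \ker\rho_\alpha$ up to a free (or finite-stabilizer) $\mathbb C^*$, so the structure map $H^*_T(\pt)\to H^*_T(U_\alpha)$ factors through $H^*_{T'}(\pt)$, where $\rho_\alpha$ is already zero. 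With that clarification your proposal is a faithful account of the proof in \cite{AB}.
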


Moreover, Atiyah and Bott \cite{AB} also gave an explicit formula for the inverse isomorphism. If $X$ is a compact manifold and $X^T$ is finite, then the localization theorem can be rephrased as follows:

\begin{theorem} [Atiyah-Bott \cite{AB}, Berline-Vergne \cite{BV}]
Suppose that $X$ is a compact manifold endowed with a torus action and the fixed point locus $X^T$ is finite. For $\alpha \in H^*_T(X)$, we have
\begin{equation}\label{ABBV}
\int_X\alpha = \sum_{p\in X^T}\frac{\alpha|_p}{e_p},
\end{equation}
where $e_p$ is the $T$-equivariant Euler class of the tangent bundle at the fixed point $p$, and $\alpha|_p$ is the restriction of $\alpha$ to the point $p$.
\end{theorem}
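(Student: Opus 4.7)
The plan is to deduce the Atiyah-Bott-Berline-Vergne formula from the first localization theorem stated just above, by making the inverse of $i^*$ explicit and then applying functoriality of the equivariant pushforward.

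First, I would establish that after tensoring with $\mathcal R_T$, the inverse of
$$i^* : H^*_T(X)\otimes \mathcal R_T \to H^*_T(X^T)\otimes \mathcal R_T$$
is given by
$$(\beta_p)_{p\in X^T} \mapsto \sum_{p\in X^T}(i_p)_*\bigl(\beta_p/e_p\bigr),$$
where $i_p : \{p\}\hookrightarrow X$ is the inclusion of the isolated fixed point $p$ and $e_p$ is the $T$-equivariant Euler class of the normal bundle to $\{p\}$ in $X$, which in this case is the equivariant Euler class of the tangent space $T_pX$. The verification rests on the self-intersection formula $i_p^*(i_p)_*(1)=e_p$ in $H^*_T(\{p\})=H^*(BT)$, together with $i_q^*(i_p)_*=0$ for $p\neq q$ since the supports are disjoint. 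Combined via the projection formula, these identities show that $i^*$ applied to the candidate inverse returns the original tuple $(\beta_p)$, and the abstract bijectivity provided by the first localization theorem then upgrades this one-sided inverse to a genuine two-sided inverse. The division by $e_p$ is legitimate precisely because $e_p$ becomes invertible after passing to $\mathcal R_T$.

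Next, I would apply this to the class $\alpha$ in the statement. Over $\mathcal R_T$ we obtain the unique decomposition
$$\alpha = \sum_{p\in X^T}(i_p)_*\bigl(\alpha|_p/e_p\bigr)$$
in $H^*_T(X)\otimes\mathcal R_T$. Applying the equivariant pushforward $\pi_*$ along $\pi : X\to \pt$, which is the integration map $\int_X$, and invoking functoriality $\pi_*\circ (i_p)_* = (\pi\circ i_p)_*$, where $\pi\circ i_p$ is the identity of a point and therefore acts trivially on $H^*_T(\pt)$, the desired identity
$$\int_X\alpha = \sum_{p\in X^T}\frac{\alpha|_p}{e_p}$$
follows immediately.

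The main obstacle is producing the explicit inverse of $i^*$: the first localization theorem asserts only that $i^*$ is an isomorphism, whereas ABBV requires a formula on the nose. The essential technical input is the self-intersection formula, which I would verify in the local model of a $T$-invariant neighborhood of each isolated fixed point; there the action linearizes, the normal bundle decomposes as a sum of one-dimensional equivariant representations, and $e_p$ is the product of the corresponding characters, which is a nonzero element of $\mathcal R_T$ and hence invertible as required for the formula to make sense.
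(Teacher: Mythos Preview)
The paper does not prove this theorem at all: it is quoted as a background result with attribution to Atiyah--Bott and Berline--Vergne, and no argument is given. So there is no ``paper's own proof'' to compare against.

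That said, your sketch is the standard derivation of the integration formula from the abstract localization isomorphism and is essentially correct. The key ingredients --- the self-intersection formula $i_p^*(i_p)_*(1)=e_p$, the vanishing $i_q^*(i_p)_*=0$ for $p\neq q$, invertibility of $e_p$ in $\mathcal R_T$, and functoriality of pushforward along $\pi\circ i_p=\mathrm{id}$ --- are exactly what one needs, and your identification of the explicit inverse as the main technical point is accurate. One small caveat: you should also note that although the decomposition $\alpha=\sum_p (i_p)_*(\alpha|_p/e_p)$ a priori lives only in $H^*_T(X)\otimes\mathcal R_T$, the left-hand side $\int_X\alpha$ lies in $H^*_T(\pt)$, so the apparent denominators on the right must in fact cancel; this is implicit in your argument but worth making explicit.
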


For many applications, the Atiyah-Bott-Berline-Vergne formula can be formulated in more down-to-earth terms. We are mainly interested in the computation of integrals over Grassmannians.

\begin{proof}[Proof of Corollary \ref{integral1}]
Consider the action of $T = (\mathbb C^*)^n$ on $\mathbb C^n$ given in coordinates by
$$(a_1,\ldots,a_n) \cdot (x_1, \ldots , x_n) = (a_1x_1, \ldots , a_nx_n).$$
This induces a torus action on the Grassmannian $G(k,n)$ with isolated fixed points $p_I$ corresponding to coordinate $k$-planes in $\mathbb C^n$. Each fixed point $p_I$ is indexed by a subset $I\subset [n]$ of size $k$. By the Atiyah-Bott-Berline-Vergne formula, we have
$$\int_{G(k,n)}\Phi(\mathcal S) = \sum_{p_I}\frac{\Phi^T(\mathcal S|_{p_I})}{e_{p_I}}$$
and 
$$\int_{G(k,n)}\Psi(\mathcal Q) = \sum_{p_I}\frac{\Psi^T(\mathcal Q|_{p_I})}{e_{p_I}}.$$
For each $p_I$, the torus actions on the fibers $\mathcal S|_{p_I}$ and $\mathcal Q|_{p_I}$ have the characters $\rho_i$ for $i\in I$ and $\rho_j$ for $j\in I^c$ respectively. Combining with the assumption, these imply that the $T$-equivariant characteristic classes at $p_I$
$$\Phi^T(\mathcal S|_{p_I}) = P(\lambda_I)$$
and
$$\Psi^T(\mathcal Q|_{p_I}) = Q(\lambda_{I^c}).$$
Since the tangent bundle is isomorphic to $\mathcal S^\vee \otimes \mathcal Q$, the characters of the torus action on the tangent bundle at $p_I$ are
$$\{\rho_i^{-1}\rho_j \mid i \in I, j \in I^c \}.$$
Thus the $T$-equivariant Euler class of the tangent bundle at $p_I$ is 
$$e_{p_I} = \prod_{i\in I,j\in I^c}(\lambda_j-\lambda_i) = (-1)^{k(n-k)}\prod_{i\in I,j\in I^c}(\lambda_i-\lambda_j).$$
Therefore, we obtain 
$$\int_{G(k,n)}\Phi(\mathcal S) = (-1)^{k(n-k)}\sum_{I\subset[n],|I|=k}\frac{P(\lambda_I)}{\displaystyle\prod_{i\in I,j\in I^c}(\lambda_i-\lambda_j)}$$
and
$$\int_{G(k,n)}\Psi(\mathcal Q) = \sum_{I\subset[n],|I|=k}\frac{Q(\lambda_{I^c})}{\displaystyle\prod_{i\in I,j\in I^c}(\lambda_j-\lambda_i)} = \sum_{I\subset[n],|I|=n-k}\frac{Q(\lambda_{I})}{\displaystyle\prod_{i\in I,j\in I^c}(\lambda_i-\lambda_j)}.$$
Combining with Theorem \ref{main}, Corollary \ref{integral1} is proved as desired.
\end{proof}
The proof of Corollary \ref{integral2} is very similar to that of Corollary \ref{integral1}. It is omitted here.

\section{Martin's formula}

In this section, we present the Martin formula for symplectic quotients, which expresses integrals on a symplectic quotient in term of that on its associated symplectic quotient. In the case of the classical Grassmannian, we show that the statement (a) of Corollary 1 can be deduced from the Martin formula. For the more details of the construction, we refer to \cite[Section 7]{Mar}. 

Let $X$ be a compact manifold endowed with a $G$-action, where $G$ is a reductive algebraic group with a maximal torus $T \subset G$. Assume that there exist the moment maps $\mu_G$ and $\mu_T$ for $G-$ and $T-$actions on $X$ respectively. The symplectic quotients $X\sslash G$ and $X\sslash T$ are defined to be the topological quotients $\mu_G^{-1}(0)/G$ and $\mu_T^{-1}(0)/T$ respectively. Moreover, both $\mu_G^{-1}(0)$ and $\mu_T^{-1}(0)$ are assumed to be compact manifolds, on which  the respective $G-$ and $T-$actions are free. This follows that $X\sslash G$ and $X\sslash T$ are compact manifolds. There are natural inclusion $i : \mu_G^{-1}(0)/T \hookrightarrow \mu_T^{-1}(0)/T$ and projection $\pi :  \mu_G^{-1}(0)/T \twoheadrightarrow  \mu_G^{-1}(0)/G$. We say $\tilde{a}\in H^*(X\sslash T)$ is a lift of $a \in H^*(X\sslash G)$ if $\pi^*a = i^*\tilde{a}$. The set of roots is denoted by $\triangle$. We denote the line bundle on $X\sslash T$ associated with $\alpha \in \triangle$ by $L_\alpha$, and set
$$e = \prod_{\alpha\in\triangle}c_1(L_\alpha).$$
Given a cohomology class $a \in H^*(X\sslash G)$ with lift $\tilde{a} \in H^*(X\sslash T)$, it was proved by Martin in \cite[Theorem B]{Mar} that 
\begin{equation}\label{Martin}
\int_{X\sslash G}a = \frac{1}{|W|}\int_{X\sslash T} \tilde{a}\cup e,
\end{equation}
where $|W|$ is the order of the Weyl group of $G$.

The Grassmannian $G(k,n)$ can be described as the symplectic quotient of the set of complex matrices with $n$ rows and $k$ columns by the unitary group
$$G(k,n) = \Hom(\mathbb C^k, \mathbb C^n)\sslash U(k).$$
The associated symplectic quotient by the maximal torus $T \subset U(k)$ turns out to be the $k-$fold product $(\mathbb C \mathbb P^{n-1})^k$. Its cohomology ring is generated by elements $u_1,\ldots,u_k$, where $u_i$ is the positive generator of the cohomology ring of the $i$-th copy of $\mathbb C\mathbb P^{n-1}$.

The Weyl group of $U(k)$ is the symmetric group on $k$ elements $S_k$. The roots $\alpha$ of $U(k)$ can be enumerated by pairs of positive integers $(i, j)$ with $1 \leq i, j \leq k$ and $i \neq j$. The cohomology class corresponding to the root $(i, j)$ is the class $u_j - u_i$ and their product is
$$e = \prod_{i\neq j}(u_j-u_i).$$

In the last paragraph of \cite[Section 7]{Mar}, Martin described the tautological sub-bundle $\mathcal S$ on the Grassmannnian $G(k,n)$ in terms of the symplectic quotient construction as 
$$\mathcal S \cong \mu_{U(k)}^{-1}(0) \times_{U(k)} \mathbb C^k_{\defi},$$
where $\mathbb C^k_{\defi}$ denotes the defining representation of $U(k)$. Thus the dual $\mathcal S^\vee$ is constructed from the dual of the defining representation, and when we restrict this dual representation to the maximal torus, it decomposes into $k$ one-dimensional representations, which have associated line bundles on the $(\mathbb C \mathbb P^{n-1})^k$ with Euler classes $u_1, \ldots , u_k$. This implies that we can identify the Chern classes of $\mathcal S^\vee$ as elementary symmetric polynomials of the $u_i$. Using the notation and assumption as in Corollary \ref{integral1}, the lift of $\Phi(\mathcal S)$ is $(-1)^{k(n-k)}P(u_1,\ldots,u_k)$ and the Martin formula (\ref{Martin}) gives 
$$\int_{G(k,n)}\Phi(\mathcal S) = \frac{1}{k!}\int_{(\mathbb C \mathbb P^{n-1})^k}(-1)^{k(n-k)}P(u_1,\ldots,u_k)\prod_{i\neq j}(u_j-u_i).$$
It is known that the cohomology ring of $( \mathbb C \mathbb P^{n-1})^k$ is
$$H^*((\mathbb C \mathbb P^{n-1})^k) \cong \mathbb C[u_1,\ldots,u_k]/(u_1^n,\ldots,u_k^n).$$
This implies that 
$$\int_{(\mathbb C \mathbb P^{n-1})^k}P(u_1,\ldots,u_k)\prod_{i\neq j}(u_j-u_i) = c(k,n).$$
We thus have
$$\int_{G(k,n)}\Phi(\mathcal S) = (-1)^{k(n-k)} \frac{c(k,n)}{k!}.$$
It means that the statement (a) of Corollary 1 can be derived from the Martin formula together with the description of the cohomology ring of the product of projective spaces and and of the push-forward for projective spaces. Equivalently, we can start with the statement (a) of Corollary 1 and using the description of the cohomology and push-forward on the product of projective spaces arrive at the Martin formula for the classical Grassmannian. The only part of Martin's proof which does not follow from Corollary 1 is the description of the tautological subbundle on the Grassmannian, but this is a classical result. Therefore we provide a valid proof of the Martin formula for the classical Grassmannian.
\subsection*{Acknowledgements}
Part of this work was done while the author was visiting the Korea Institute for Advanced Study (KIAS). This work was finished during the author's postdoctoral fellowship at the National Center for Theoretical Sciences (NCTS), Taiwan. The author thanks all for the financial support and hospitality. The author would also like to express deep thanks to William Cherry and Phung Ho Hai for useful discussions, and to the referee for carefully reading the paper and suggestions leading to the improvement of the exposition. This research was supported by grants no. B2018.DNA.10 and no. 101.04-2018.305.

\end{document}